\documentclass[12pt,reqno]{article}
\usepackage{amssymb,latexsym,amsmath,amsthm,mathtools} 
\usepackage{url}
\usepackage[usenames]{color}
\usepackage{letltxmacro}
\usepackage{fullpage}
\makeatletter

\renewcommand\section{\@startsection {section}{1}{\z@}
{-30pt \@plus -1ex \@minus -.2ex}
{2.3ex \@plus.2ex}
{\normalfont\normalsize\bfseries\boldmath}}

\renewcommand\subsection{\@startsection{subsection}{2}{\z@}
{-3.25ex\@plus -1ex \@minus -.2ex}
{1.5ex \@plus .2ex}
{\normalfont\normalsize\bfseries\boldmath}}

\renewcommand{\@seccntformat}[1]{\csname the#1\endcsname. }

\makeatother

\newtheorem{theorem}{Theorem}
\newtheorem{lemma}{Lemma}

\newtheorem{corollary}{Corollary}

\theoremstyle{definition}
\newtheorem{definition}{Definition}

\newtheorem{remark}{Remark}

\usepackage{hyperref}
\usepackage{orcidlink}

\definecolor{webgreen}{rgb}{0,.5,0}
\definecolor{webbrown}{rgb}{.6,0,0}

\newcommand{\seqnum}[1]{\href{https://oeis.org/#1}{\rm \underline{#1}}}

\makeatletter
\let\oldr@@t\r@@t
\def\r@@t#1#2{%
\setbox0=\hbox{$\oldr@@t#1{#2\,}$}\dimen0=\ht0
\advance\dimen0-0.2\ht0
\setbox2=\hbox{\vrule height\ht0 depth -\dimen0}%
{\box0\lower0.4pt\box2}}
\LetLtxMacro{\oldsqrt}{\sqrt}
\renewcommand*{\sqrt}[2][\ ]{\oldsqrt[#1]{#2}}

\begin{document}

\begin{center}
\vskip 1cm{\LARGE\bf 
On the relation between perfect powers and tetration frozen digits}
\vspace{12mm}

{\Large Marco Rip\`a\,\orcidlink{0000-0002-6036-5541}}
\vspace{3mm}

Rome, Italy \\
e-mail: \url{marcokrt1984@yahoo.it}
\end{center}
\vspace{12mm}

\begin{abstract}
\noindent This paper provides a link between integer exponentiation and integer tetration since it is devoted to introducing some peculiar sets of perfect powers characterized by any given value of their constant congruence speed, revealing a fascinating relation \linebreak between the degree of every perfect power belonging to any congruence class modulo $20$ and the number of digits frozen by these special tetration bases, in radix-$10$, for any unit increment of the hyperexponent. In particular, given any positive integer $c$, we constructively prove the existence of infinitely many $c$-th perfect powers that have a constant congruence speed of $c$.
\end{abstract}

\vspace{6mm}


\section{Introduction} \label{sec:Intr}

We explore a tool in modular arithmetic that predicts patterns among the right-hand digits of terms in rapidly growing sequences arising from power towers, thereby reducing computational overhead.

For clarity, we will denote $\mathbb{N}_0$ as the set of nonnegative integers (including zero) and $\mathbb{N}$ as the set of positive integers $\{1,2,3,\ldots\}$.

In recent years, by assuming the standard decimal numeral system (radix-$10$), we have shown that the integer tetration $^{b}a \coloneq \begin{cases} a \mbox{ }\mbox{ }\mbox{ }\mbox{ }\mbox{ }\mbox{ }\mbox{ }\mbox{ }\mbox{ \hspace{1mm} if $b = 1$} \\
a^{(^{(b-1)}a)} \mbox{ \hspace{1mm} if $b\geq 2$}
\end{cases}\hspace{-3.5mm}$ has a unique property \cite{44, 2} involving the number of new frozen rightmost digits for any unit increment of its hyperexponent, $b \in \mathbb{N}$ \cite{11, 12}.
Indeed, this value no longer depends on $b$ as $b$ becomes sufficiently large (see the sequence \seqnum{A372490} in the On-Line Encyclopedia of Integer Sequences \cite{oeis}) and the tetration base, $a \in \mathbb{N}$, is not a multiple of $10$.

We refer to the mentioned property as the constancy of the \textit{congruence speed} of tetration (see Definition~\ref{def2.1} and also the comments of the OEIS sequence \seqnum{A317905}).

From \cite{11}, we know that each positive integer $a \geq 2$, which is not a multiple of $10$, is characterized by a finite, strictly positive, integer value of its constant congruence speed (the map of the constant congruence speed of every $a$ as above is provided by \cite{11, 12}).

Then, the present paper aims to constructively prove the existence of infinitely many perfect powers having any given positive constant congruence speed.

A pleasant result, that follows from Theorem~\ref{Theorem 2.4} as a corollary, is the existence, for any given positive integer $c$, of infinitely many $c$-th perfect powers (i.e., an integer $a > 1$ is a $c$-th perfect power if there exist some integers $\tilde{a}$ and $c$ such that $a = \tilde{a}^c$, so we have \textit{perfect squares} if $c=2$, \textit{perfect cubes} if $c=3$, and so forth) having a constant congruence speed of $c$.


\section{Preliminary investigations with the automorphic numbers} \label{sec:2}

In order to present the results compactly, let us properly define the constant congruence speed of tetration as already done in \cite[p. 442]{12}, Definitions 1.1 and 1.2.

\begin{definition} \label{def2.1}
Let $n\in\mathbb{N}_0$ and assume that $a\in\mathbb{N}-\{1\}$ is not a multiple of $10$. Then, given $^{b-1}a\equiv{^{b}a}\pmod {10^{n}} \, \wedge \, ^{b-1}a \not\equiv{^{b}a}\pmod {10^{n+1}}$, for all $b~\in~\mathbb{N}$, $V(a,b)$ returns the nonnegative integer such that $^{b}a\equiv{^{b+1}a}\pmod {10^{n+V(a,b)}}$ $\wedge$ $^{b}a \not\equiv{^{b+1}a}\pmod {10^{n+V(a,b)+1}}$, and we define $V(a,b)$ as the {\it congruence speed} of the base $a$ at the given height of its hyperexponent $b$.

Furthermore, let $\bar{b} \coloneq \min\left\{b \in \mathbb{N}: V(a, b)=V(a, b+k) \hspace{1mm} \textnormal{for all} \hspace{1mm} k \in \mathbb{N}\right\}$ so that we define, as {\it constant congruence speed} of $a$, the positive integer $V(a) \coloneq V(a, \bar{b})$.
\end{definition}

In general, we know that a sufficient but not necessary condition for having $V(a)=V(a, \bar{b})$ is to set $\bar{b} \coloneq a+1$, and for a tighter bound on $\bar{b} \coloneq \bar{b}(a)$, holding for any $a \not\equiv 0 \pmod {10}$, see \cite[p. 450]{12}.

As a clarifying example, let us consider the case $a=807$. Then, we have $V(807, 1)=0$, $V(807, 2)=4$, $V(807, 3)=4$, $V(807, 4)=4$, $V(807, 5)=4$, and finally $V(807, 6)=V(807, 7)=\cdots=V(807)=3$ since $\tilde{\nu}_{5}\left(807^2+1\right) + 2= 4+2 = 6$ (by \cite[Definition 2.1]{12}) and

\noindent$^{1}807=807$;

\noindent$^{2}807 \equiv 54962039628331827388850173\color{cyan}{\bf{7943}}$ $\pmod {10^{30}}$;

\noindent$^{3}807 \equiv 6016926514668229405256\color{cyan}{\bf{3285}}\color{red}7943$ $\pmod {10^{30}}$;

\noindent$^{4}807 \equiv 146336906474874632\color{cyan}{\bf{6260}}\color{red}32857943$ $\pmod {10^{30}}$;

\noindent$^{5}807 \equiv 35503490744897\color{cyan}{\bf{3150}}\color{red}626032857943$ $\pmod {10^{30}}$;

\noindent$^{6}807 \equiv 47863568981\color{cyan}{\bf{228}}\color{red}3150626032857943$ $\pmod {10^{30}}$;

\noindent$^{7}807 \equiv 02704888\color{cyan}{\bf{876}}\color{red}2283150626032857943$ $\pmod {10^{30}}$;

\hspace{0.1mm}\vdots

\noindent$^{(1 \textnormal{googolplex})}807 \equiv \color{red} 803001638762283150626032857943$ $\pmod {10^{30}}$.

\begin{lemma}\label{Lemma 2.0}
Let $a \in \mathbb{N}$ be such that $a \not\equiv \hspace{-0.5 mm} 0 \pmod {10}$. Then, for all $t \in \mathbb{N}_0$, there exist infinitely many $c \in \mathbb{N}$ such that $V(a^c)=t$.
\end{lemma}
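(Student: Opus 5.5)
The plan is to rely on the explicit closed form of the constant congruence speed from \cite{11, 12}, which expresses $V(a)$ through the $2$-adic and $5$-adic valuations of $a^2-1$ and $a^2+1$, according to the residue class of $a$ modulo $20$ (or modulo $5$ and $2$). Roughly:
\begin{itemize}
\item if $a\equiv \pm1 \pmod 5$ and $a$ is odd, then $V(a)=\min\{\nu_5(a^2-1),\,\nu_2(a^2-1)-1\}$;
\item if $a\equiv \pm2 \pmod 5$ and $a$ is odd, then $V(a)=\min\{\nu_5(a^2+1),\,\nu_2(a^2-1)-1\}$;
\item if $a$ is even, only the $5$-adic term survives;
\item if $a\equiv 5\pmod{10}$, only the $2$-adic term survives.
\end{itemize}
I would first restate this map in the precise form of \cite[Definition 2.1]{12}, replacing $\nu_5$ by $\tilde{\nu}_5$ where needed. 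I would then apply it to the base $a^c$ instead of $a$, so the whole question reduces to controlling $\nu_2\bigl(a^{2c}-1\bigr)$ and $\nu_5\bigl(a^{2c}-1\bigr)$ (or $\nu_5\bigl(a^{2c}+1\bigr)$) as functions of $c$.

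The key tool is the lifting-the-exponent lemma.
\begin{itemize}
\item For $a$ odd, $\nu_2\bigl(a^{2c}-1\bigr)=\nu_2(a^2-1)+\nu_2(c)$.
\item For $5\nmid a$ and $c$ a multiple of $4$, $\nu_5\bigl(a^{c}-1\bigr)=\nu_5(a^4-1)+\nu_5(c/4)$.
\end{itemize}
I would therefore take $c=4\cdot 2^{x}5^{y}m$ with $\gcd(m,10)=1$. Then $a^c\equiv1\pmod 5$, so the relevant case of the formula is the first one, and the two valuations become $\alpha+x$ and $\beta+y$, where $\alpha,\beta$ depend only on $a$. Given $t$, I choose $x$ and $y$ so that $\min\{\beta+y,\;\alpha+x-1\}=t$, for instance $y=t-\beta$ and $x$ large. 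Letting $m$ range over all integers coprime to $10$ then yields infinitely many admissible $c$.

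The degenerate residue classes are handled separately. For $a$ even only the $5$-adic term appears, so varying $y$ suffices. For $a\equiv5\pmod{10}$ only the $2$-adic term appears, so varying $x$ suffices. In each case I also have to check that $a^c$ is never a multiple of $10$, which holds because $a$ is not.

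The main obstacle is small $t$. The construction above only reaches $t\ge t_0(a)$, where $t_0(a)$ is determined by $\alpha$ and $\beta$. To reach every $t\in\mathbb{N}_0$, I would instead use $c$ coprime to $10$, or $c$ in suitable residue classes modulo $4$. Then $a^c$ stays in the same class as $a$ modulo $20$, so $V(a^c)$ is controlled by the unperturbed valuations of $a$; taking $c\equiv 2\pmod 4$ moves the base into the class $a^c\equiv \pm1$ modulo $5$. I expect this step to require a careful case check, including whether the value $t=0$ is attainable at all. For $a=1$ we have $a^c=1$ for every $c$ (the convention for $V(1)$ must be fixed). For $a\ge2$ the introduction asserts $V\ge1$, so the statement for $t=0$ would have to be read accordingly. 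This is the point where I would double-check the exact hypotheses against the precise formula in \cite{12}.
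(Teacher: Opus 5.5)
You have a genuine gap at the step you defer as a ``careful case check'' for small $t$, and that step cannot be completed.

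Your large-$t$ part is sound. The valuation map together with your two LTE identities gives, for $a\ge 2$ with $10\nmid a$,
\[
V(a^c)=\min\bigl\{\nu_5(a^4-1)+\nu_5(c),\ \nu_2(a^2-1)-1+\nu_2(c)\bigr\},
\]
where only the first term is present for even $a$ and only the second for $5\mid a$. Your family $c=4\cdot 2^x5^ym$ realizes every $t\ge t_0(a)$ for infinitely many $c$. Odd $c$ and $c\equiv 2\pmod 4$ then fill in the few values between $V(a)$ and $t_0(a)$.

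Below $V(a)$, however, nothing can be done. Since $\nu_5(c),\nu_2(c)\ge 0$, the same formula shows $V(a^c)\ge V(a)$ for every $c$. Concretely:
\begin{itemize}
\item $V(5^c)=2+\nu_2(c)$, and $V(7^c)\ge V(7)=\min\{\nu_5(50),\nu_2(48)-1\}=2$, so $t=1$ never occurs for these bases.
\item Theorem~\ref{Theorem 2.2} gives $V\bigl((10^{k+s}+10^s+1)^c\bigr)=s+\min\{\nu_5(c),\nu_2(c)\}\ge s$ for all $c$, so the missed range $t<s$ can be arbitrarily long.
\item $a=1$ yields only $t=0$, and no $a\ge 2$ ever yields $t=0$.
\end{itemize}
So with $a$ fixed first, the lemma is false. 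What your argument actually proves is the correct fact that $\{V(a^c):c\in\mathbb{N}\}=\{t\in\mathbb{N}: t\ge V(a)\}$ for $a\ge2$, with each value attained for infinitely many $c$. For a given base this is sharper than anything in the paper.

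The paper's proof only establishes the existential reading, in which the base depends on $t$. For $t\ge 1$ it takes $\hat a=10^t-1$, which has $V(\hat a)=t$. It then notes that $V(\hat a^c)=t$ for the infinitely many exponents $c\in\{{}^{b-1}\hat a,{}^{b}\hat a,\dots\}$, which are all odd and prime to $5$, so neither valuation grows. For $t=0$ it uses $a=1$ and the convention $V(1)=0$.

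To get a valid proof you must switch to that reading, and then your formula finishes it at once. Pick any base with $V(a)=t$, for example $10^t-1$, and any $c$ coprime to $10$; then $V(a^c)=V(a)=t$. Treating small $t$ for an arbitrary fixed $a$ is a dead end, not a case check.
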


\begin{proof}
Disregarding the special case $t=0$, this proof immediately follows from Definition~1. 

For any integer $a>1$ which is not a multiple of $10$, the constant congruence speed of the tetration $^{b}a$ is well-defined and it is the same for any $b \in \{a+1, a+2, a+3, \dots\}$. Thus, by the last line of Equation~(2) in \cite{11}, it is sufficient to consider $\hat{a} \coloneq 10^t-1$ so that $V(\hat{a})=t$ is true for any given positive integer $t$, and then we can easily complete the proof by observing that $V(1)=0$ as stated in \cite[Definition 1.3]{12}.

Trivially, $V(\hat{a}, b)=V(\hat{a}, b+1)=V(\hat{a}, b+2)=\cdots$ is certainly true for every integer $b \geq \hat{a}+1$ and, by assuming that $t \in \mathbb{N}$, for all the aforementioned values of $b$, we have that $\hat{a}=10^t-1$ implies $V(\hat{a}, b)=t$ (while from $a=1 \Rightarrow t=0$ it follows that $V(1^c)=0$ for any nonnegative integer $c$).

Consequently, let $t \in \mathbb{N}$, assume $b \in \{10^t, 10^t+1, 10^t+2, \dots\}$, and then $V((10^t-1)^c, b)=V((10^t-1)^c)=t$ is true for any $c \in \{{^{b-1}(10^t-1)},{^{b}(10^t-1)},{^{b+1}(10^t-1)},\dots \}$ so that the proof of Lemma~\ref{Lemma 2.0} is complete.
\end{proof}

Thus, Lemma~\ref{Lemma 2.0} shows the existence of infinitely many $c$-th powers of the tetration base $a : a \equiv 1,2,3,4,5,6,7,8,9 \pmod {10}$ that are characterized by any given (arbitrarily large) nonnegative constant congruence speed.

\begin{remark}\label{Remark 0} We note that, in radix-$10$, there exist only three positive $1$-automorphic numbers and they are congruent modulo $100$ to $1$, $25$, and $76$ (respectively). Thus, the corresponding three integers found by considering the two rightmost digits of the analogous solutions of the fundamental $10$-adic equation $y^5=y$, by \cite[Equation~(2)]{11} (see also the OEIS sequences \seqnum{A018247} and \seqnum{A018248}), describe $1$-automorphic numbers \cite{16} (e.g., $\alpha_{76} \mapsto a_{76} \coloneq 76$ since $76^2 \equiv 76 \pmod {10^2}$ and we know that \cite{5}, in radix-$10$, there are only four $10$-adic solutions, including $\alpha_{00} \coloneq \dots000000$, to the equation $y^2=y$). Consequently, by looking at lines 4, 5, and 7 of Equation~(16) in \cite{12}, we can see that the recurrences described by Equations~(\ref{eq1})--(\ref{eq3}) hold for every $c \in \mathbb{N}$.

\begin{equation}\label{eq1}
\tilde{a} \equiv 6 \pmod {10} \Rightarrow V(\tilde{a})\begin{cases}=V(\tilde{a}^c) \quad \textnormal{iff} \quad c \equiv 1,2,3,4 \pmod {5}\\
\leq V(\tilde{a}^c) \quad \textnormal{iff} \quad c \equiv 0 \pmod {5}
\end{cases}.
\end{equation}

\begin{equation}\label{eq2}
\hspace{-1.15cm} \tilde{a} \equiv 5 \pmod {20} \Rightarrow V(\tilde{a})\begin{cases}=V(\tilde{a}^c) \quad \textnormal{iff} \quad c \equiv 1 \pmod {2}\\
\leq V(\tilde{a}^c) \quad \textnormal{iff} \quad c \equiv 0 \pmod {2}
\end{cases}.
\end{equation}

\begin{equation}\label{eq3}
\tilde{a} \equiv 1 \pmod {20} \Rightarrow V(\tilde{a})\begin{cases}=V(\tilde{a}^c) \quad \textnormal{iff} \quad c \equiv 1,2,3,4 \pmod {5}\\
\leq V(\tilde{a}^c) \quad \textnormal{iff} \quad c \equiv 0 \pmod {5}
\end{cases}.
\end{equation}
\end{remark}

The investigation of this observation (with specific reference to Equation~(\ref{eq2})) leads us to the following theorem.

\begin{theorem}\label{Theorem 2.3}
For each $c \in \mathbb{N}$, there exist infinitely many $a : a \equiv 5 \hspace{-0.5 mm} \pmod {20}$ such that $\sqrt[c]{a}\in \mathbb{N} \wedge V(\sqrt[c]{a}) = t \wedge V(a) \geq t$ holds for all $t \in \mathbb{N}-\{1\}$. Symmetrically, for each $t \in \mathbb{N}-\{1\}$, there exist infinitely many $a : a \equiv 5 \hspace{-0.5 mm} \pmod {20}$ such that $\sqrt[c]{a} \in \mathbb{N} \wedge V(\sqrt[c]{a}) = t \wedge V(a) \geq t$ holds for all $c \in \mathbb{N}$.
\end{theorem}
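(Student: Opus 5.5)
The proof is a construction. For every pair $(c,t)$ with $c \in \mathbb{N}$ and $t \geq 2$, I will exhibit infinitely many bases $\tilde{a} \equiv 5 \pmod{20}$ with $V(\tilde{a}) = t$, and then put $a \coloneq \tilde{a}^c$. Both halves of the statement follow from this, since the bases can be chosen uniformly in either of the two parameters.

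\textbf{Step 1 (bases with prescribed speed).} I will use the known map of the constant congruence speed from \cite{11, 12}, specifically the line of Equation~(16) in \cite{12} for $\tilde{a} \equiv 5 \pmod{10}$. That line says $V(\tilde{a})$ is governed by the $2$-adic valuation of $\tilde{a}^2-1$, or equivalently by how many rightmost digits $\tilde{a}$ shares with the $10$-adic root $\alpha_{25}$ of $y^5=y$. For each $t \geq 2$ I will then choose $\tilde{a}$ congruent to a suitable truncation of $\alpha_{25}$ modulo $2^{t+1}$, arranged so that it matches to exactly the depth yielding $V(\tilde{a})=t$. The residues $\tilde{a} \equiv 5 \pmod{20}$ are compatible with this $2$-adic condition once $t \geq 2$; this is where $t=1$ gets excluded, because $\tilde{a} \equiv 5 \pmod{20}$ already forces $V(\tilde{a}) \geq 2$. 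By the Chinese remainder theorem, the admissible $\tilde{a}$ fill a full residue class modulo $20 \cdot 2^{t+1}$ (or modulo a suitable power of $10$), so there are infinitely many of them.

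\textbf{Step 2 (passing to powers).} For such $\tilde{a}$, set $a=\tilde{a}^c$. Then $\sqrt[c]{a}=\tilde{a}\in\mathbb{N}$, and $V(\sqrt[c]{a})=t$ by Step 1. Since $\tilde{a}\equiv 5 \pmod{20}$, we have $\tilde{a}^c \equiv 5 \pmod{20}$ for every $c$: modulo $4$, $1^c=1$, and modulo $5$ the power is $0$. So $a$ lies in the required class. Equation~(\ref{eq2}) gives $V(\tilde{a}^c)=V(\tilde{a})=t$ for odd $c$ and $V(\tilde{a}^c)\geq V(\tilde{a})=t$ for even $c$, so $V(a)\geq t$ in all cases.

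\textbf{Step 3 (infinitude in both directions).} Fix $c$. Distinct $\tilde{a}$ give distinct $a=\tilde{a}^c$, so the infinite family from Step 1 gives infinitely many $a$ for each $t$; this is the first assertion. Now fix $t$. The same infinite set of $\tilde{a}$ works for every $c$ simultaneously, so for each $c$ we again get infinitely many $a$; this is the second assertion, with the roles of $c$ and $t$ exchanged.

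The main obstacle is Step 1: extracting from \cite[Equation~(16)]{12} an exact criterion that pins $V(\tilde{a})=t$ for $\tilde{a}\equiv 5\pmod{20}$, rather than just a lower bound, and checking that it is satisfiable for every $t\geq 2$ by infinitely many residues. My first attempt would be explicit families such as $\tilde{a}=2^{t}k+5\cdot(\text{unit})$, computing $\nu_2(\tilde{a}^2-1)$ directly. Failing that, I would fall back on truncations of $\alpha_{25}$ that are perturbed in the digit at position $t+1$.
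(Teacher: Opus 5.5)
Your proposal is correct and is essentially the paper's argument: you build infinitely many bases $\tilde a\equiv 5\pmod{20}$ with $V(\tilde a)=t$, then pass to $a=\tilde a^c$ using that $V$ cannot drop under powers in this class. The paper builds $\tilde a$ by perturbing the $(t+1)$-th digit of $\alpha_{25}$ and adding $10^{k+t}$, where you take a residue class; for the power step it uses $\tilde a^c\equiv\alpha_{25}\pmod{10^t}$, where you cite Equation~(\ref{eq2}). The criterion you flagged as the obstacle is simply $V(\tilde a)=\nu_2(\tilde a^2-1)-1=\nu_2(\tilde a-1)$ for $\tilde a\equiv 5\pmod{20}$, so the admissible class is $5\mid\tilde a$, $\tilde a\equiv 1+2^t\pmod{2^{t+1}}$; only the $2$-adic agreement with $\alpha_{25}$ matters, not the number of shared decimal digits (e.g.\ $V(1025)=10$), so your ``equivalently'' in Step~1 is inaccurate, though harmless since your construction works modulo $2^{t+1}$.
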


\begin{proof}
Let us (constructively) prove first the last statement of Theorem~\ref{Theorem 2.3} since it simply follows from the constancy of the congruence speed as it has been shown in \cite[Section 2.1]{11}.

Let the symbol ``$\_$'' indicate the juxtaposition of consecutive digits (e.g., $3\_6\_1 = 36\_1 = 3\_61 = 361$). Consider the rightmost $t \in \mathbb{N}-\{1\}$ digits of the $10$-adic integer $\alpha_{25} \coloneq \{5^{2^n}\}_\infty$, say $x_t\_x_{t-1}\_\dots\_2\_5$, and then juxtapose to the left the $(t+1)$-th digit plus $1$ if $x_{t+1}\leq 8$ or the $(t+1)$-th digit minus $1$ if $x_{t+1}=9$.

So, let ${\tilde{x}_{t+1}} \coloneq \begin{cases} {x_{t+1}+1} \mbox{ }\mbox{ }\mbox{ }\mbox{ }\mbox{ }\mbox{ }\mbox{ }\mbox{ if \mbox{ } ${x_{t+1} \in \{0,1,2,3,4,5,6,7,8\}}$} \\
{x_{t+1}-1} \mbox{ }\mbox{ }\mbox{ }\mbox{ }\mbox{ }\mbox{ }\mbox{ }\mbox{ if \mbox{ } ${x_{t+1} \in \{9\}}$}
\end{cases}\hspace{-3.5mm}.$

Thus, the base $\tilde{a} \coloneq \tilde{x}_{t+1}\_x_{t}\_x_{t-1}\_\dots\_2\_5$ is characterized by a constant congruence speed of $t$ (i.e., $V(\tilde{x}_{t+1}\_x_{t}\_x_{t-1}\_\dots\_2\_5)=V(\tilde{a})=t$ for any $t \in \mathbb{N}-\{1\}$). This property follows from \cite[Equation~(16)]{12} (i.e., $2^t \mid \tilde{x}_{t+1}\_x_{t}\_x_{t-1}\_\dots\_2\_5 \hspace{2mm}\wedge$ $2^{t+1} \nmid \tilde{x}_{t+1}\_x_{t}\_x_{t-1}\_\dots\_2\_5$, for any $t \geq 2$). Since (as discussed in Remark~\ref{Remark 0}) $\alpha_{25} \mapsto a_{25} \coloneq 25$ and $25^2 \equiv 25 \pmod {10^2}$, from Hensel's lemma~\cite{1} (see also \cite{3, 13}), we have that if $\tilde{a} \equiv \alpha_{25} \pmod {10^t} \wedge \tilde{a} \not\equiv \alpha_{25} \pmod {10^{t+1}}$, then $\tilde{a}^c \equiv \alpha_{25} \pmod {10^t}$ for any given $c \in \mathbb{N}$ (in general, we cannot assert that $\left(\tilde{a} \equiv \alpha_{25} \pmod {10^t} \wedge \tilde{a} \not\equiv \alpha_{25} \pmod {10^{t+1}} \right)$ implies $\tilde{a}^c \not\equiv \alpha_{25} \pmod {10^{t+1}}$ for the given pair $(t, c)$).

\sloppy Consequently, by simply taking $a \coloneq (\tilde{x}_{t+1}\_x_{t}\_x_{t-1}\_\dots\_2\_5)^c$ (as $c$ is free to run over the positive integers), we have proved the existence, for any given $t \in \mathbb{N}-\{1\}$, of infinitely many tetration bases $a \equiv 5 \pmod{20}$ such that $V(a) \geq t$ holds for all the elements of the aforementioned set, a set that contains infinitely many distinct perfect powers originated from the string $\tilde{x}_{t+1}\_x_{t}\_x_{t-1}\_\dots\_2\_5$ (since $\tilde{a}$ is a positive integer by definition, then $a=\tilde{a}^c$ implies that $\sqrt[c]{a} \in \mathbb{N}$).
Hence, $V(\tilde{x}_{t+1}\_x_{t}\_x_{t-1}\_\dots\_2\_5)=t$ implies $V((\tilde{x}_{t+1}\_x_{t}\_x_{t-1}\_\dots\_2\_5)^c) \geq t$ by observing that $V((\tilde{x}_{t+1}\_x_{t}\_x_{t-1}\_\dots\_2\_5)^c)=V(\dots\_x_{t}\_x_{t-1}\_\dots\_2\_5)$, and trivially $\sqrt[c]{(\tilde{x}_{t+1}\_x_{t}\_x_{t-1}\_\dots\_2\_5)^c} \in \mathbb{N}$ for all $c \in \mathbb{N}$ (we point out that, for any $t \geq 2$ and as long as $c$ is a positive integer, $(\tilde{x}_{t+1}\_x_{t}\_x_{t-1}\_\dots\_2\_5)^c \equiv x_{t}\_x_{t-1}\_\dots\_2\_5 \pmod {10^t}$ holds by construction \cite{16}).

Now, let us prove the first statement of Theorem~\ref{Theorem 2.3} and complete the proof.

For this purpose, it is sufficient to note that
$$V \hspace{-0.7mm}\left(\tilde{x}_{t+1}\_x_{t}\_x_{t-1}\_\dots\_2\_5 \right) = V \hspace{-0.7mm}\left(10^{k+t} + \tilde{x}_{t+1}\_x_{t}\_x_{t-1}\_\dots\_2\_5 \right)$$
is true for any positive integer $k$. So, we can take the $c$-th power of every integer of the form $10^{k+t} + \tilde{x}_{t+1}\_x_{t}\_x_{t-1}\_\dots\_2\_5$ to get $k$ distinct sets of cardinality $\aleph_0$ each, whose elements, by construction, always satisfy the first statement of the theorem (we have already shown that, for any given $c \in \mathbb{N}$, if $V(10^{k+t} + \tilde{x}_{t+1}\_x_{t}\_x_{t-1}\_\dots\_2\_5)=t$, then $V((10^{k+t} + \tilde{x}_{t+1}\_x_{t}\_x_{t-1}\_\dots\_2\_5)^c) \geq t$ holds for every $t \in \mathbb{N}-\{1\}$).

Therefore, both statements of Theorem~\ref{Theorem 2.3} have been shown to be true, and this concludes the proof.
\end{proof}


\section{Main result} \label{sec:3}

From here on, let us indicate the $p$-adic valuation \cite{9} of any tetration base $a$ as $\nu_p(a)$, for any prime number $p$.

Then, we need the following lemma to prove the existence, for any $\tilde{a} \in \mathbb{N}-\{1\}$ such that $\tilde{a} \not\equiv 0 \pmod {10}$, of infinitely many $c$-th powers of $\tilde{a}$ having a constant congruence speed of $V(\tilde{a})$, $V(\tilde{a})+1$, $V(\tilde{a})+2$, $V(\tilde{a})+3$, and so forth. Furthermore, for any given positive integer $c$, Lemma~\ref{Lemma 2.5} (thanks to \cite{12}, Equation~(2), line~2) implies the existence of infinitely many tetration bases of the form $(10^{k+t}+10^t+1)^c$ (where $k\in \mathbb{N}$ and $t \in \mathbb{N}-\{1\}$) characterized by any constant congruence speed greater than or equal to $2+\min\{\nu_5(c), \nu_2(c)\}$.

\begin{lemma}\label{Lemma 2.5}
For every $c,k \in \mathbb{N}$ and $t \in \mathbb{N}-\{1\}$, $\nu_5((10^{k+t}+10^t+1)^c-1)=t+\nu_5(c)$ and $\nu_2((10^{k+t}+10^t+1)^c-1)=t+\nu_2(c)$.
\end{lemma}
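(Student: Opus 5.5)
This is a direct application of the Lifting The Exponent lemma. Set $x \coloneq 10^{k+t}+10^t+1$, so that $x-1 = 10^t(10^k+1)$. Since $k \geq 1$, the factor $10^k+1$ is coprime to $10$: it is odd and congruent to $1$ modulo $5$. Hence
\[
\nu_5(x-1)=t \quad \text{and} \quad \nu_2(x-1)=t.
\]

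For $p=5$, the odd-prime case of LTE applies because $5 \mid x-1$, $5 \nmid x$ and $5 \nmid 1$. It gives
\[
\nu_5(x^c-1)=\nu_5(x-1)+\nu_5(c)=t+\nu_5(c),
\]
and this holds for every $c \in \mathbb{N}$ with no further condition.

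For $p=2$, the simple form of LTE, $\nu_2(x^c-1)=\nu_2(x-1)+\nu_2(c)$, is valid whenever $4 \mid x-1$. This is exactly where the hypothesis $t \geq 2$ is used, since $\nu_2(x-1)=t\geq 2$. If one prefers not to quote the lemma, write $c=2^s m$ with $m$ odd and proceed in two steps.

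First, $x^m-1=(x-1)(x^{m-1}+\cdots+1)$, and the second factor is a sum of $m$ odd terms, so it is odd. Therefore $\nu_2(x^m-1)=t$, and $y \coloneq x^m$ satisfies $y\equiv 1 \pmod 4$.

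Second, each squaring step $y^{2^{j+1}}-1=(y^{2^j}-1)(y^{2^j}+1)$ adds exactly one factor of $2$. This is because $y^{2^j}\equiv 1 \pmod 4$ gives $y^{2^j}+1\equiv 2 \pmod 4$. After $s$ steps we obtain $\nu_2(x^c-1)=t+s=t+\nu_2(c)$.

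The same two-step induction handles $p=5$ if one wants a self-contained proof. Write $c=5^s m$ with $5\nmid m$. The factor $x^{m-1}+\cdots+1\equiv m \not\equiv 0 \pmod 5$, so it contributes nothing. At each fifth-power step, the factor $z^4+z^3+z^2+z+1$ with $z\equiv 1 \pmod {5}$ has $5$-adic valuation exactly $1$. This last claim needs the expansion $z=1+5u$, which gives $z^4+\cdots+1\equiv 5 \pmod{25}$.

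The only potential obstacle is the $2$-adic case, because LTE fails for $p=2$ when $x\equiv 3 \pmod 4$ and $c$ is even. The condition $t\geq 2$, which makes $x\equiv 1 \pmod 4$, removes exactly this case.
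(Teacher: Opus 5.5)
Your proposal is correct and follows essentially the same route as the paper: both compute $\nu_5(x-1)=\nu_2(x-1)=t$ and apply the lifting-the-exponent lemma. For $p=2$ you quote the form valid when $4 \mid x-1$, whereas the paper uses the even-$c$ formula $\nu_2(x-1)+\nu_2(x+1)+\nu_2(c)-1$ and checks $\nu_2(x+1)=1$ via $t\geq 2$; your optional factorization argument is a correct self-contained check of the same LTE statements rather than a genuinely different proof.
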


\begin{proof}
First of all, we prove that, for every positive integer $c$, $\nu_5((10^{k+t}+10^t+1)^c-1)=t+\nu_5(c)$, where $t,k \in \mathbb{N}$.

This can be achieved by using the lifting-the-exponent lemma (LTE lemma).

For this purpose, let $c$ be a positive integer, we note that
$$\nu_5((10^{k+t}+10^t+1)^c-1)=\nu_5((10^{k+t}+10^t+1)^c-1^c),$$ so we can invoke the LTE lemma (see \cite{14} and \cite[Lemma 2.6]{4}) for odd primes, stating that for any integers $x$, $y$, a positive integer $c$, and a prime number $p$ such that $p \nmid x \wedge p \nmid y$, if $p$ divides $x-y$, then $\nu_p\left(x^c-y^c \right)= \nu_p(x-y)+\nu_p(c)$.

Thus, by observing that $p \coloneq 5$ is an odd prime satisfying all the conditions above for $x \coloneq 10^{k+t}+10^t+1 \wedge y \coloneq 1$ (since $10^{k+t}+10^t$ is a multiple of $5$),
$$\nu_5((10^{k+t}+10^t+1)^c-1)=\nu_5((10^{k+t}+10^t+1)^c-1^c)=\nu_5(10^{k+t}+10^t)+\nu_5(c)=t+\nu_5(c).$$

Similarly, we can use the LTE lemma to show that $2 \mid c \Rightarrow \nu_2((10^{k+t}+10^t+1)^c-1)=t+\nu_2(c)$ and also $2 \nmid c \Rightarrow \nu_2((10^{k+t}+10^t+1)^c-1)=t+\nu_2(c)$.

In detail (see \cite{15}), both $x \coloneq 10^{k+t}+10^t+1$ and $y \coloneq 1$ are odd numbers so that we can invoke the LTE lemma for $p=2$ since $2 \nmid x$, $2 \nmid y$, and $2 \mid (x-y)$.

Thus, the LTE lemma for $p=2$ states that if $c$ is odd, then $\nu_2{\left((10^{k+t}+10^t+1)^c-1^c \right)}=\nu_2{\left( (10^{k+t}+10^t+1)-1 \right)}=\nu_2{\left(10^{k+t} + 10^t \right)}=\nu_2{\left(10^t \right)} = t$.
Since $2 \nmid c \Rightarrow \nu_2(c)=0$, we can safely rewrite the above as $\nu_2{\left((10^{k+t}+10^t+1)^c-1 \right)}=t + \nu_2(c)$.

On the other hand, if $c$ is even, from the LTE lemma it follows that 
\begin{gather*}
\nu_2{\left((10^{k+t}+10^t+1)^c-1^c \right)}=\nu_2{\left( 10^{k+t}+10^t+1-1 \right)}+\nu_2{\left( 10^{k+t}+10^t+1+1 \right)} + \nu_2{(c)} - 1\\
=\nu_2{\left( 10^{k+t}+10^t \right)}+\nu_2{\left( 10^{k+t}+10^t+2 \right)} + \nu_2{(c)} - 1.
\end{gather*}

Now, for any given pair of positive integers $(r, s)$ and a prime $p$, we know that $\nu_p(r + s)= \min\{\nu_p(r), \nu_p(s)\}$ as long as $\nu_p(r) \neq \nu_p(s)$.
Accordingly, $t \geq 2$ implies $\nu_2(10^t) \geq 2 > \nu_2(2)$ so that $\nu_2{(10^{k+t}+10^t+2)}=\min\{\nu_2(10^{k+t} ), \nu_2\left(10^t \right), \nu_2(2)\} = 1 $.

Hence, $2 \mid c \Rightarrow \nu_2{\left((10^{k+t}+10^t+1)^c-1^c \right)}=t+1-1+\nu_2{(c)}=t+\nu_2{(c)}$.

Therefore, for every triad $(t,c,k)$ of positive integers, $\nu_5((10^{k+t}+10^t+1)^c-1)=t+\nu_5(c)$ and, symmetrically, $\nu_2((10^{k+t}+10^t+1)^c-1)=t + \nu_2(c)$.
\end{proof}

\begin{theorem}\label{Theorem 2.2}
For any given $c, k \in \mathbb{N}$ and $t \in \mathbb{N}-\{1\}$,
\begin{equation*}
V\hspace{-0.7mm}\left(\left(10^{k+t}+10^t+1 \right)^c \right) =t + \min\{\nu_5(c), \nu_2(c)\}.
\end{equation*}
\end{theorem}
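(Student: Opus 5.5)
The plan is to combine Lemma~\ref{Lemma 2.5} with the characterization of the constant congruence speed for bases $a \equiv 1 \pmod{10}$ from \cite[Equation~(2), line~2]{12} (equivalently, line~7 of \cite[Equation~(16)]{12}). That characterization says: if $a \equiv 1 \pmod{10}$, then $V(a)=\min\{\nu_5(a-1),\nu_2(a-1)\}$. Here $\nu_2(a-1)$ is used as is, because $a \equiv 1 \pmod 4$ in our situation, so no correction via $\nu_2(a+1)$ is needed.

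Set $a \coloneq (10^{k+t}+10^t+1)^c$. First, $a \equiv 1 \pmod{10^t}$, and in particular $a \equiv 1 \pmod{20}$ since $t \geq 2$. So $a$ lies in the residue class where the quoted formula for $V$ applies, and the case distinctions for $a \equiv 3,7 \pmod{10}$ or $a \equiv 9 \pmod{10}$ play no role. Second, Lemma~\ref{Lemma 2.5} gives directly
\[
\nu_5(a-1)=t+\nu_5(c), \qquad \nu_2(a-1)=t+\nu_2(c).
\]
Substituting into the formula yields
\[
V(a)=\min\{t+\nu_5(c),\,t+\nu_2(c)\}=t+\min\{\nu_5(c),\nu_2(c)\}.
\]

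The main obstacle is making sure the cited formula from \cite{12} applies verbatim, namely that it computes $V(a)$ as the minimum of the two valuations of $a-1$ for every $a\equiv 1 \pmod{20}$ with no exceptional subcases. For example, a subcase with $\nu_5(a-1)=\nu_2(a-1)$ could conceivably force an extra digit. If there were doubt, I would justify the formula directly. I would show that for large $b$, $^{b+1}a-{^{b}a}=a^{^{b-1}a}\bigl(a^{(^{b}a-^{b-1}a)}-1\bigr)$. Applying LTE to this difference at $p=2$ and $p=5$ shows that $\nu_p$ of consecutive tetration differences increases by exactly $\nu_p(a-1)$ at each step, since $\nu_p(a^m-1)=\nu_p(a-1)+\nu_p(m)$ and $a\equiv 1 \pmod 4$ handles $p=2$.

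The number of new frozen decimal digits is therefore governed by the minimum of the two increments once both valuations have passed the transient phase. In the tie case the minimum is still the correct increment, because both valuations grow by exactly $t+\nu_p(c)$. This confirms the formula, and the theorem then follows from the substitution above.
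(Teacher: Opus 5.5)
Your proposal is correct and follows the paper's proof. Both observe that $(10^{k+t}+10^t+1)^c\equiv 1 \pmod{100}$ (your mod $20$ suffices), apply the formula $V(a)=\min\{\nu_5(a-1),\nu_2(a-1)\}$ from \cite{12}, Equation~(2), line~2, and substitute the valuations from Lemma~\ref{Lemma 2.5}; your LTE fallback, showing that $\nu_p({}^{b+1}a-{}^{b}a)$ grows by exactly $\nu_p(a-1)$ per step for $p\in\{2,5\}$, is a valid self-contained justification of the formula the paper simply cites.
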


\begin{proof}
Theorem~\ref{Theorem 2.2} easily follows from Lemma~\ref{Lemma 2.5}. Since $t \geq 2$, $10^{k+t}+10^t+1 \equiv 1 \pmod{10^2}$ (the congruence modulo $100$ does not depend on $k$ as $k+t>t$), and then also $(10^{k+t}+10^t+1)^c \equiv 1 \pmod{100}$ holds for each positive integer $c$.

By Theorem~2.1 of \cite{12} (see Equation~(2), line~2), we have that $V((10^{k+t}+10^t+1)^c)=\min\{\nu_5((10^{k+t}+10^t+1)^c-1), \nu_2((10^{k+t}+10^t+1)^c-1)\}$. Since Lemma~\ref{Lemma 2.5} asserts that $\nu_5((10^{t+k}+10^t+1)^c-1)=t+\nu_5(c)$ and $\nu_2((10^{t+k}+10^t+1)^c-1)=t+\nu_2(c)$, it follows that $V((10^{k+t}+10^t+1)^c)=\min\{ t+\nu_5(c) , t+\nu_2(c)\} = t + \min\{\nu_5(c), \nu_2(c)\}$ for any positive integers $c$, $k$, and $t-1$.
\end{proof}

\begin{remark}\label{Remark 1} \sloppy Let the tetration base $a : a \equiv 6 \pmod {10}$ be given. Then, by looking at the two rightmost digits of the corresponding $10$-adic solution, $\alpha_{76}=\dots 7109376$ (see Remark~\ref{Remark 0}), and applying the usual strategy (already described in the proof of Theorem~\ref{Theorem 2.3}), we find the sequence $a_n \coloneq 10^{n+1}+86$, $n \in \mathbb{N}$ (defining also the set $\{186,1086,10086,100086,\dots \}$). Now, we can create an infinite set consisting of the $c$-th powers of each aforementioned term, a set whose elements are all characterized by a unit constant congruence speed as long as $c$ is not a multiple of $5$.
Thus, $5 \nmid c$ implies $V(186^c)=V(1086^c)=V(10086^c)=V(100086^c)= \cdots=1$.
\end{remark}

The above is just another example of the $\tilde{x}_{t+1}$ idea, introduced in the proof of Theorem~\ref{Theorem 2.3}, shown by taking into account $t=1$ and the solution $\alpha_{76} \coloneq 1-\left\{5^{2^n}\right\}_{\infty}$ of the equation $y^2=y$ in the commutative ring of $10$-adic integers (as we know \cite{11}, the other three solutions are $\alpha_{00} \coloneq 0$, $\alpha_{01} \coloneq 1$, and $\alpha_{25} \coloneq \left\{5^{2^n}\right\}_{\infty}$).

\begin{corollary}\label{Corollary 2.1}
Let $t \in \mathbb{N}$ and assume that $V(\tilde{a})=t$. Then, for any nonnegative integer $h$, there exist infinitely many $c \in \mathbb{N}$ such that $V(\tilde{a}^c)=t+h$.
\end{corollary}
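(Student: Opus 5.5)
The plan is to treat the constant congruence speed of a power $\tilde{a}^c$ as a function of $c$ and to show two things: along a suitable chain of exponents it starts at $t$, and it grows in unit steps without bound. A discrete intermediate-value argument will then show that every value $t+h$ is attained, and the freedom left in $c$ will give infinitely many such exponents.

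\textbf{Step 1 (the formula).} Following the proof of Theorem~\ref{Theorem 2.2}, I will use the explicit formulas of \cite{12} (Equation~(2) and Equation~(16), one line per congruence class of the base modulo $20$). For bases coprime to $10$, these express $V$ as
\[
\min\{\nu_5(Q(a)),\nu_2(Q(a))\}+\text{const},
\]
where $Q(a)$ is one of $a-1$, $a+1$, $a^2+1$, depending on the class. For bases $\equiv 5 \pmod{10}$ (resp.\ even bases), $V$ is governed by how far $a$ agrees with $\alpha_{25}$ (resp.\ $\alpha_{76}$), measured by a single $2$-adic (resp.\ $5$-adic) valuation. First I will replace $\tilde{a}$ by $\tilde{a}^{c_0}$, where $c_0\in\{1,2,4\}$ is chosen with $\nu_2(c_0)=0$ or small and $\nu_5(c_0)=0$, so that every further power stays in one fixed line of those formulas. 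This is exactly the mechanism behind Equations~(\ref{eq1})--(\ref{eq3}), which say that $V$ does not change unless the exponent picks up a factor $5$ or $2$.

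\textbf{Step 2 (LTE step).} Exactly as in Lemma~\ref{Lemma 2.5}, the lifting-the-exponent lemma gives
\[
\nu_p\bigl((\tilde{a}^{c})^{m}-1\bigr)=\nu_p(\tilde{a}^{c}-1)+\nu_p(m)
\]
for the relevant $p\in\{2,5\}$, after checking the $p=2$ condition $4\mid \tilde{a}^{c}-1$, which is ensured by the even $c_0$ if necessary. Hence multiplying the exponent by $2$ or by $5$ raises one of the two valuations by exactly $1$ and leaves the other unchanged. Multiplying by an $m$ coprime to $10$ changes neither valuation. The same holds for the $\alpha_{25}$ and $\alpha_{76}$ cases, by Hensel/LTE applied to $\tilde{a}^c-\alpha$.

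\textbf{Step 3 (intermediate-value argument).} Define $c_j \coloneq 10^{j}$, or $2^j$, or $5^j$ in the one-prime cases. Then $V(\tilde{a}^{c_j})$ starts at $V(\tilde{a})=t$ for $j=0$; I need to verify that the normalisation in Step~1 does not move the starting value, which is the content of Equations~(\ref{eq1})--(\ref{eq3}). Going from $c_j$ to $c_{j+1}$, the minimum of two quantities each increasing by exactly $1$ increases by exactly $1$, so $V(\tilde{a}^{c_j})=t+j$. If I instead step one prime at a time, the minimum increases by $0$ or $1$ at each step and tends to infinity, so every integer $\ge t$ is hit. Infinitely many $c$ for each $h$ then come from multiplying $c_h$ by arbitrary $m$ with $\gcd(m,10)=1$, which by Step~2 does not change $V$.

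\textbf{Main obstacle.} The hard step is Step~1 together with the starting point of Step~3. I must check, class by class, that the formula from \cite{12} for $\tilde{a}$ and for its powers is literally the same expression. Bases $\equiv 3,7 \pmod{10}$ or $\equiv 4,9$ use $a^2+1$ or $a+1$, and even powers switch them to the $a-1$ line. I would handle this first by passing to $c_0=4$, so that all bases coprime to $10$ land in the class $1 \pmod{20}$. I would then use Equation~(\ref{eq3}) and the precise lines of \cite[Equation~(16)]{12} to confirm that $V(\tilde{a}^{4})=V(\tilde{a})$ when $\nu_5(c_0)=0$. If in some class the normalisation raises the value, I will instead take $c_0=1$ and run the LTE argument directly on $\tilde{a}^2+1$ or $\tilde{a}+1$.
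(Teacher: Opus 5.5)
Your Steps 2 and 3 have the right mechanism. The point you flag as the main obstacle, however, is a real gap, and the fix you propose for it fails.

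\textbf{Where the normalisation breaks.} Passing from $\tilde a$ to $\tilde a^{4}$ does not preserve $V$ for odd bases. The factor $4$ leaves the $5$-adic term unchanged but raises the $2$-adic one by $2$. The paper's own example shows this. We have $V(807)=3$, while $807^4\equiv 1 \pmod{20}$. From $807^2-1=2^4\cdot 40703$ and $807^2+1=2\cdot 5^4\cdot 521$ we get $V(807^4)=\min\{\nu_5(807^4-1),\nu_2(807^4-1)\}=\min\{4,5\}=4$. The effect can be a jump of $2$: for $\tilde a=2499$ one has $V(2499)=2$ and $V(2499^2)=3$, but $V(2499^4)=\min\{4,4\}=4$. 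So your chain $c=4\cdot 10^j$ never produces $t+1$.

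\textbf{Why your proposed checks cannot close it.} Whether the normalisation moves $V$ depends on the relative size of the two valuations of the individual base, not on its residue class. For instance, $7$ and $807$ are both $\equiv 7 \pmod{20}$, yet $V(7^4)=V(7)=2$. So no class-by-class check can settle it. Nor can Equation~(\ref{eq3}) carry this step: read literally, it already fails for $c=2$, since $V(501)=\min\{\nu_5(500),\nu_2(500)\}=2$ but $V(501^2)=\min\{\nu_5(251000),\nu_2(251000)\}=3$. Your fallback, LTE applied directly to $\tilde a^2+1$ or $\tilde a+1$, does not survive the even exponents $10^j$. As you observed yourself, these move the power to the $a-1$ line. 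Finally, the Step~1 description of $V$ as $\min\{\nu_5(Q),\nu_2(Q)\}+\mathrm{const}$ with a single $Q$ is inaccurate, because $\nu_2(a^2+1)=1$ for odd $a$. For $807$, the $5$-part comes from $a^2+1$ and the $2$-part from $a+1$.

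\textbf{The missing idea.} Use a form of $V$ that does not depend on the residue class and behaves well under powers. For $a>1$ with $10\nmid a$,
\[
V(a)=\min\{\nu_5(a^4-1),\ \nu_2(a^2-1)-1\},
\]
where the term belonging to a prime that divides $a$ is omitted.
\begin{itemize}
\item It agrees with the cases the paper uses: $\min\{\nu_5(a-1),\nu_2(a-1)\}$ for $a\equiv 1\pmod{20}$, $\nu_5(a-1)$ for $a\equiv 6\pmod{10}$, and $\min\{4,3\}=3$ for $807$.
\item It follows directly from Definition~\ref{def2.1}. Put $x_b={}^{b}a$; then $4\mid x_b-x_{b-1}$ for $b\ge 3$. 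LTE gives $\nu_5(x_{b+1}-x_b)=\nu_5(a^4-1)+\nu_5(x_b-x_{b-1})$ when $5\nmid a$, and $\nu_2(x_{b+1}-x_b)=\nu_2(a^2-1)-1+\nu_2(x_b-x_{b-1})$ when $2\nmid a$. For $p\mid a$, the valuation satisfies $\nu_p(x_{b+1}-x_b)\ge x_{b-1}\nu_p(a)$, so it grows faster than linearly.
\end{itemize}
Since $\nu_5(a^{4c}-1)=\nu_5(a^4-1)+\nu_5(c)$ and $\nu_2(a^{2c}-1)=\nu_2(a^2-1)+\nu_2(c)$ for every $c$, this yields
\[
V(\tilde a^c)=\min\{\nu_5(\tilde a^4-1)+\nu_5(c),\ \nu_2(\tilde a^2-1)-1+\nu_2(c)\}.
\]
No normalisation and no separate $\alpha_{25}$/$\alpha_{76}$ treatment are needed. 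Taking $c=10^h m$ with $\gcd(m,10)=1$ gives $V(\tilde a^c)=t+h$ at once.

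\textbf{Comparison with the paper.} Repaired this way, your route proves the corollary for every admissible $\tilde a$. The paper's proof treats only explicit families. For $t\ge 2$ it uses $\tilde a=10^{k+t}+10^t+1$ with $c=2^{k+h}5^h$, through Theorem~\ref{Theorem 2.2}. For $t=1$ it uses $\tilde a=10^{k+1}+86$ with $c=2^{k-1}5^h$.
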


\begin{proof}
Let $k \in \mathbb{N}$. If $t \in \mathbb{N}-\{1\}$, it is sufficient to observe that, by Theorem~\ref{Theorem 2.2},
\begin{equation*}
\nu_2(c) \geq \nu_5(c) \Rightarrow V \hspace{-0.7mm}\left(\left(10^{k+t}+10^t+1 \right)^c \right) = t+\nu_5(c).
\end{equation*}

Accordingly, let $\tilde{a} \coloneq 10^{k+t}+10^t+1$, $c \coloneq 2^{k+h} \cdot 5^h$ (so that $\nu_5(2^{k+h}) \leq \nu_2(2^{k+h})$), and then, for any $h \in \mathbb{N}_0$, $\tilde{a}^c$ identifies an infinite set of valid tetration bases (since $\nu_5(2^{k+h} \cdot 5^h)=h$ is true for any positive integer $k$ and, consequently, the constant congruence speed of $\tilde{a}^{2^{k+h} \cdot 5^h}$ does not depend on $k$).

For the remaining case, $t=1$, Remark~\ref{Remark 1} gives us a valid set of solutions (i.e., any tetration base of the form $(10^{k+1}+86)^{2^{k-1} \cdot 5^h}$ does the job since $V(10^{k+1}+86)=1$ for every positive integer $k$), so just let $\tilde{a} \coloneq 10^{k+1}+86$ and $c \coloneq 2^{k-1} \cdot 5^h$ in order to get $V((10^{k+1}+86)^{2^{k-1} \cdot 5^h})=\nu_5((10^{k+1}+86)^{2^{k-1} \cdot 5^h}-1)=1+\nu_5{(5^h)}=1+h$ by \cite{12}, Equation~(16), line 4).
\end{proof}

\begin{theorem}\label{Theorem 2.4}
Let the integer $a>1$ not be a multiple of $10$. Then, for all $c \in \mathbb{N}$, there exist infinitely many $a$ such that $\sqrt[c]{a}\in \mathbb{N} \wedge V(a)=V(\sqrt[c]{a})=t$ holds for every integer $t$ greater than $\min\{\nu_5(c), \nu_2(c)\} +1$.
\end{theorem}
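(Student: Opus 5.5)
The plan is to split according to $m \coloneq \min\{\nu_5(c), \nu_2(c)\}$. Throughout, the perfect power is $a \coloneq \tilde{a}^c$, with $\tilde{a}$ chosen in an infinite family indexed by $k \in \mathbb{N}$. As in Theorem~\ref{Theorem 2.2}, the constant congruence speed of a base $\equiv 1 \pmod{100}$ will be read off from \cite[Equation~(2), line~2]{12} as the minimum of its $5$-adic and $2$-adic valuations after subtracting $1$. The valuations of $\tilde{a}^c-1$ will be computed with the LTE lemma exactly as in Lemma~\ref{Lemma 2.5}.

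\emph{Case $m=0$.} This is the case $2 \nmid c$ or $5 \nmid c$. I will not use the symmetric base $10^{k+t}+10^t+1$. Instead I unbalance the two valuations of $\tilde{a}-1$. If $5 \nmid c$, take $\tilde{a} \coloneq 10^t\cdot 2^{k} + 1$ (or any $1+5^t 2^{y}u$ with $y>t$ and $\gcd(u,10)=1$). Then $\nu_5(\tilde{a}-1)=t$ and $\nu_2(\tilde{a}-1)=t+k>t$. LTE gives $\nu_5(\tilde{a}^c-1)=t$ and $\nu_2(\tilde{a}^c-1)=t+k+\nu_2(c)$, so $V(\tilde{a})=V(\tilde{a}^c)=t$. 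If instead $2 \nmid c$, swap the roles and take $\tilde{a} \coloneq 10^t\cdot 5^{k}+1$. The odd-$c$ form of LTE at $p=2$ gives $\nu_2(\tilde{a}^c-1)=t$, while the $5$-adic valuation is $t+k+\nu_5(c)>t$. Both requirements hold, and varying $k$ gives infinitely many distinct $a$ for every $t \geq 2 = m+2$, matching the threshold $t>m+1$.

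\emph{Case $m \geq 1$, i.e.\ $10 \mid c$.} This is the main obstacle. Any $\tilde{a}\equiv 1 \pmod{100}$ fails, because $\min\{x+\nu_5(c),\, y+\nu_2(c)\}>\min\{x,y\}$ whenever both $\nu_5(c)$ and $\nu_2(c)$ are positive. Equations~(\ref{eq1})--(\ref{eq3}) also exclude bases $\equiv 1 \pmod{20}$, $\equiv 5 \pmod{20}$ and $\equiv 6 \pmod{10}$. So I will look at the remaining residue classes $\tilde{a}\equiv 2,3,4,7,8,9 \pmod{10}$. For these, \cite[Equation~(16)]{12} expresses $V(\tilde{a})$ through $\nu_5(\tilde{a}^2+1)$ or $\nu_5(\tilde{a}+1)$ with a fixed offset. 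Meanwhile $\tilde{a}^c\equiv 1$ or $6 \pmod{10}$, and $V(\tilde{a}^c)$ is governed by $\nu_5(\tilde{a}^{c}-1)$, which equals $\nu_5(\tilde{a}^{4}-1)+\nu_5(c)$ by LTE, together with a $2$-adic term when $\tilde{a}$ is odd.

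The first attempt will be to choose $\tilde{a}\equiv 3,7 \pmod{10}$ so that the offset in the formula for $V(\tilde{a})$ absorbs exactly the $\nu_5(c)$ increase of $V(\tilde{a}^c)$. The $2$-adic valuation of $\tilde{a}^c-1$ then has to be pinned so that it is not the binding term. Concretely, I will set $\nu_5(\tilde{a}^2+1)=t-m+(\text{offset})$, lift the base by Hensel's lemma to $\tilde{a}\equiv \pm\sqrt{-1} \pmod{5^{\,\cdot}}$, and then adjust it modulo a high power of $2$ using the Chinese remainder theorem. Adding multiples of $10^{k+N}$ for large $N$ and varying $k$ then gives infinitely many bases. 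The hypothesis $t>m+1$ should be exactly what makes the target valuation $t-m$ large enough (at least $2$) for these formulas to apply.

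The step I am least sure of is the offset bookkeeping between $V(\tilde{a})$ and $V(\tilde{a}^c)$ across different residue classes. If it does not line up for $\tilde{a}\equiv 3,7$, I would next try the classes $\tilde{a}\equiv 4,9 \pmod{10}$, where $V$ is tied to $\nu_5(\tilde{a}+1)$, and redo the same valuation matching.
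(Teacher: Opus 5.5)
Your case $m=0$ is correct. With $\tilde a=10^t2^k+1$ (resp.\ $10^t5^k+1$), LTE gives $V(\tilde a)=V(\tilde a^c)=t$, so for $10\nmid c$ you even get the two-sided equality. The paper's base $10^{k+t}+10^{t}+1$ does the same there by Theorem~\ref{Theorem 2.2}, so your unbalancing is harmless but unnecessary.

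The case $m\ge1$, however, is only a search plan, and the search cannot succeed in any residue class. Take $\gcd(\tilde a,10)=1$ and write ${}^{b+1}\tilde a-{}^{b}\tilde a=\tilde a^{\,{}^{b-1}\tilde a}\bigl(\tilde a^{\,{}^{b}\tilde a-{}^{b-1}\tilde a}-1\bigr)$. Applying LTE shows that the $5$-adic and $2$-adic valuations of these differences eventually grow by $\nu_5(\tilde a^4-1)$ and $\nu_2(\tilde a^2-1)-1$ per step. Hence $V(\tilde a)=\min\{\nu_5(\tilde a^4-1),\,\nu_2(\tilde a^2-1)-1\}$. Whatever expression the case table uses in a given class ($\nu_5(\tilde a+1)$, $\nu_5(\tilde a^2+1)$, \dots), it equals this minimum, so there is no class-dependent offset to absorb anything.

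For the power, LTE gives $\nu_5(\tilde a^{4c}-1)=\nu_5(\tilde a^4-1)+\nu_5(c)$ and $\nu_2(\tilde a^{2c}-1)=\nu_2(\tilde a^2-1)+\nu_2(c)$. Therefore $V(\tilde a^c)\ge V(\tilde a)+m$. If $2\mid\tilde a$ or $5\mid\tilde a$, only one of the two terms is present and the same inequality holds. So for $10\mid c$ no admissible $\tilde a$ satisfies $V(\tilde a^c)=V(\tilde a)$, and the clause $V(\sqrt[c]{a})=t$ of the statement cannot be met: the object you are searching for does not exist. Your bookkeeping is also off in one spot: $\tilde a^c\equiv1,6\pmod{10}$ and $\nu_5(\tilde a^c-1)=\nu_5(\tilde a^4-1)+\nu_5(c)$ both fail when $c\equiv2\pmod4$ and $\tilde a\equiv\pm2\pmod5$, e.g.\ $3^{10}\equiv9\pmod{10}$.

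What is provable for $m\ge1$, and what the paper's proof actually establishes, is only $V(a)=t$ for a perfect $c$-th power $a$. The paper takes $\tilde a=10^{k+t}+10^{t-m}+1$, so $\nu_5(\tilde a-1)=\nu_2(\tilde a-1)=t-m\ge2$; this is exactly where $t>m+1$ enters, as you suspected. Theorem~\ref{Theorem 2.2}, applied with $t-m$ in place of $t$ (via Lemma~\ref{Lemma 2.5}), then gives $V(\tilde a^c)=t-m+m=t$, whereas $V(\tilde a)=t-m$. For instance, $V(10101)=2$ but $V(10101^{10})=3$. So for $10\mid c$ the paper's argument does not deliver $V(\sqrt[c]{a})=t$ either, consistent with the obstruction above.

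The idea missing from your proposal is to stop trying to match the root's speed. Instead, lower the common valuation of $\tilde a-1$ by exactly $m$ in advance, and let the LTE increment lift $V(\tilde a^c)$ back to $t$. Pursuing your residue-class plan further would mean chasing an impossible target.
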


\begin{proof}
Since Theorem~\ref{Theorem 2.2} states that, given $t \in \mathbb{N}-\{1\}$, $V((10^{k+t}+10^t+1)^c) = t+ \min\{ \nu_5(c), \nu_2(c)\}$ for each $c, k \in \mathbb{N}$, it follows that, for any given triad $(t,k,c)$ such that $t \geq 2+\min\{\nu_5(c), \nu_2(c)\}$, at least one of the two tetration bases $(10^{k+t-\nu_5(c)}+10^{t-\nu_5(c)}+1)^c$ and $(10^{k+t-\nu_2(c)}+10^{t-\nu_2(c)}+1)^c$ is guaranteed to have a constant congruence speed of $t$.

Although this is enough to prove the theorem, we are free to simplify the generic form of the above by observing that (see \cite{15}), as we call $j \coloneq j(c)$ the last nonzero digit of $c$, only the following two cases matter: $j \neq 5$ and $j = 5$.

Hence,
\begin{align*}
j \neq 5 \Rightarrow \nu_5(c) \leq \nu_2(c) \Rightarrow V\hspace{-0.7mm}\left(\left(10^{k+t}+10^{t-\nu_5(c)}+1 \right)^c \right) &=\nu_5((10^{k+t}+10^{t-\nu_5(c)}+1 )^c-1) \\
&=\nu_5(10^{k+t}+10^{t-\nu_5(c)})+\nu_5(c) \\
&=\min\{\nu_5(10^{k+t}), \nu_5(10^{t-\nu_5(c)})\}+\nu_5(c)
\end{align*}
(since $k > -\nu_5(c)$ implies that $\nu_5(10^{k+t}) > \nu_5(10^{t-\nu_5(c)})$), and then it follows that $$V\hspace{-0.7mm}\left(\left(10^{k+t}+10^{t-\nu_5(c)}+1 \right)^c \right)=\nu_5\left(10^{t-\nu_5(c)}\right)+\nu_5(c)=t-\nu_5(c)+\nu_5(c)=t.$$

Conversely (given the fact that $\nu_2(10^{k+t}) > \nu_2(10^{k+t-\nu_2(c)}) > \nu_2(10^{t-\nu_2(c)})$),
\begin{align*}
j = 5 \Rightarrow \nu_5(c) > \nu_2(c) \Rightarrow V\hspace{-0.7mm}\left(\left(10^{k+t}+10^{t-\nu_2(c)}+1 \right)^c \right) &=\nu_2((10^{k+t}+10^{t-\nu_2(c)}+1)^c-1) \\
&=\nu_2((10^{k+t-\nu_2(c)}+10^{t-\nu_2(c)}+1)^c-1)
\end{align*}
and, by Lemma~\ref{Lemma 2.5}, $\nu_2((10^{k+t-\nu_2(c)}+10^{t-\nu_2(c)}+1)^c-1)=t$ follows.

Thus, $c : j\neq 5$ guarantees that $V((10^{k+t}+10^{t-\nu_5(c)}+1)^c) = t$ is true for any $c,k \in \mathbb{N}$ and $t>\nu_5(c)$, while $V((10^{k+t}+10^{t-\nu_2(c)}+1)^c) = t$ as $c : j = 5$ and $t>\nu_2(c)$.

Therefore, we have shown that, for any given positive integer $c$, at least one of \linebreak $V((10^{k+t}+10^{t-\nu_5(c)}+1)^c)=t$ ($t>\nu_5(c)+1$) and $V((10^{k+t}+10^{t-\nu_2(c)}+1)^c)=t$ ($t>\nu_2(c)+1$) is true, so the proof is complete.
\end{proof}

Now, let $t = c$ and observe that $c - \nu_5(c) < 2 \Rightarrow c = 1$, while $c - \nu_2(c) < 2 \Rightarrow c \leq 2$.

Then, the proof of Theorem~\ref{Theorem 2.4} shows the existence of a very special set of tetration bases that are $c$-th powers of an integer and whose constant congruence speed is $c \in \mathbb{N}-\{1,2\}$, a set including all the bases of the form $(10^{c+k}+10^{c-\min\{\nu_5(c), \hspace{0.5 mm} \nu_2(c)\}}+1)^c$, $k \in \mathbb{N}$ (e.g., $(c=1000 \wedge k=314)$ implies $V((10^{1314}+10^{997}+1)^{1000})=1000$).

Therefore, for each $c > 2$, we have already proved Corollary~\ref{Corollary 2.5} as a particular case of Theorem~\ref{Theorem 2.4}.

\begin{corollary}\label{Corollary 2.5} Let $a \in \mathbb{N}-\{1\}$ not be a multiple of $10$. Then, for every $c \in \mathbb{N}$, there exist infinitely many $a$ such that $\sqrt[c]{a}\in \mathbb{N} \wedge V(a)=V(\sqrt[c]{a})=c$.
\end{corollary}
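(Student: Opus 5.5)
The natural plan is to split according to $\min\{\nu_5(c),\nu_2(c)\}$, because Theorem~\ref{Theorem 2.2} already gives the answer whenever this minimum is $0$. If $10 \nmid c$, then $\min\{\nu_5(c),\nu_2(c)\}=0$. For $c \geq 2$ I would take the root $r \coloneq 10^{k+c}+10^{c}+1$ with $k \in \mathbb{N}$ and $a \coloneq r^c$. Theorem~\ref{Theorem 2.2} used with exponent $1$ and with exponent $c$ gives $V(r)=c$ and $V(r^c)=c+0=c$. Distinct values of $k$ give distinct $a$, so there are infinitely many. For $c=1$ the statement only asks for $V(a)=1$, and Remark~\ref{Remark 1} supplies the bases $10^{k+1}+86$. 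This settles every $c$ not divisible by $10$; as the discussion preceding the corollary notes, it covers $c\in\{1,2\}$ as well.

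For $10 \mid c$ the family $10^{k+t}+10^t+1$ cannot work, and more generally no root $r \equiv 1 \pmod{20}$ can. I would argue this first via LTE, exactly as in Lemma~\ref{Lemma 2.5}. Write $A\coloneq\nu_5(r-1)$ and $B\coloneq\nu_2(r-1)$. Then $V(r)=\min\{A,B\}$ while $V(r^c)=\min\{A+\nu_5(c),\,B+\nu_2(c)\}$. Both increments are at least $1$ when $10\mid c$, so $V(r^c)>V(r)$. The root therefore has to come from another residue class, with the case formulas of \cite[Equation~(16)]{12} used for $r$ and for $r^c$ separately.

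For $20 \mid c$ I would take $r \equiv 3,7 \pmod{10}$. For such $r$ the paper's example $a=807$ illustrates $V(r)=\nu_5(r^2+1)$, at least under the hypotheses of \cite{12}. Since $4 \mid c$ we get $r^c\equiv 1 \pmod{20}$, so $V(r^c)=\min\{\nu_5(r^c-1),\nu_2(r^c-1)\}$.
\begin{itemize}
\item \emph{$5$-adic part.} LTE applied to $(r^4)^{c/4}-1$ gives $\nu_5(r^c-1)=\nu_5(r^2+1)+\nu_5(c)$. Choosing $\nu_5(r^2+1)=c$ (a Hensel-type lift of a square root of $-1$ modulo $5^{c}$, exactly $5^c$) makes this $c+\nu_5(c)\geq c$.
\item \emph{$2$-adic part.} The $2$-adic LTE gives $\nu_2(r^c-1)=\nu_2(r^2-1)+\nu_2(c)-1$. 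I would impose $\nu_2(r^2-1)=c+1-\nu_2(c)$, which is at least $3$, so that this equals exactly $c$.
\end{itemize}
The two conditions concern coprime moduli, so the Chinese Remainder Theorem gives a residue class modulo $2^{N}5^{N}$ for large $N$. Adding multiples $10^{N+k}$ then yields infinitely many roots $r$ with $V(r)=V(r^c)=c$. Before relying on this I must check that the $\nu_2$ condition does not change $V(r)$ under the formula of \cite{12}.

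The main obstacle is $c\equiv 10 \pmod{20}$. Running the same bookkeeping through the other classes gives a strict increase every time. For $r\equiv 3,7 \pmod{10}$, the factor $r^{c}$ with $c/2$ odd gives $r^c\equiv -1 \pmod 5$. For $r\equiv 5 \pmod{20}$ and $r \equiv 6 \pmod{10}$, Equations~(\ref{eq1})--(\ref{eq2}) already force $\leq$ with possible strictness. For $r \equiv 9 \pmod{10}$ and for even $r$, the relevant valuation gains $\nu_5(c)$ or $\nu_2(c)$. I would therefore first re-derive carefully the exact case formulas of \cite[Equation~(16)]{12}, including the small exceptional lines. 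The point is to find a class where the speed of $r$ is governed by one prime and that of $r^c$ by the other, so that the gains from $\nu_5(c)$ and $\nu_2(c)$ fall on the non-minimal term. If no such class exists, the corollary needs restricting to $c \not\equiv 10 \pmod{20}$, and I would state this explicitly rather than hide it.
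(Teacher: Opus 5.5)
Your argument for $10\nmid c$ is correct and is the paper's construction. For $r=10^{k+c}+10^c+1$, Theorem~\ref{Theorem 2.2} with exponents $1$ and $c$ gives $V(r)=V(r^c)=c$, and your $c=1$ family $10^{k+1}+86$ works as well as the paper's $10^{k+1}+11$.

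Your $20\mid c$ construction, however, fails at exactly the check you postponed. For odd $r$, every difference ${^{b}r}-{^{b-1}r}$ is even, so the $2$-adic LTE gives $\nu_2({^{b+1}r}-{^{b}r})=\nu_2(r^2-1)-1+\nu_2({^{b}r}-{^{b-1}r})$. This $2$-adic slope competes with the $5$-adic slope $\nu_5(r^2+1)$, so for $r\equiv 3,7 \pmod{10}$ one has $V(r)=\min\{\nu_2(r^2-1)-1,\ \nu_5(r^2+1)\}$. The example $807$ shows exactly this, not $V(r)=\nu_5(r^2+1)$: $V(807)=3=\nu_2(807^2-1)-1$, while $\nu_5(807^2+1)=4$. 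The number $6$ quoted there is $\bar b$, not a speed. Your choice $\nu_2(r^2-1)=c+1-\nu_2(c)$ therefore gives $V(r)=c-\nu_2(c)\leq c-2$. Raising $\nu_2(r^2-1)$ to get $V(r)=c$ instead pushes $V(r^c)$ above $c$.

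In fact no residue class works, so the restriction you anticipate must be to $10\nmid c$, not merely $c\not\equiv 10 \pmod{20}$. Put $s_2(r)=\nu_2(r^2-1)-1$ for odd $r$, and $s_5(r)=\nu_5(r^4-1)$ for $5\nmid r$. The latter equals $\nu_5(r-1)$, $\nu_5(r+1)$ or $\nu_5(r^2+1)$ according as $r\equiv 1$, $r\equiv 4$ or $r\equiv 2,3 \pmod 5$. The same bookkeeping gives:
\begin{itemize}
\item $V(r)=s_5(r)$ for even $r$;
\item $V(r)=s_2(r)$ for $r\equiv 5 \pmod{10}$;
\item $V(r)=\min\{s_2(r),s_5(r)\}$ otherwise.
\end{itemize}
A prime dividing $r$ produces valuations growing like ${^{b-1}r}$, which never bind. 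LTE then gives $s_2(r^c)=s_2(r)+\nu_2(c)$ and $s_5(r^c)=s_5(r)+\nu_5(c)$. Hence $10\mid c$ forces $V(r^c)\geq V(r)+1$ for every $r>1$ with $10\nmid r$. The statement as written is therefore false for every multiple of $10$, and what belongs there is this impossibility argument, not a search over classes.

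The paper's own proof has the same hole you detected for $r\equiv 1 \pmod{20}$. Its roots $10^{c+k}+10^{c-m}+1$, with $m=\min\{\nu_5(c),\nu_2(c)\}$, have speed $c-m$, not $c$; for example $V(10^{1314}+10^{997}+1)=997$, not $1000$. So for $10\mid c$ the paper establishes only the weaker claim of its abstract: infinitely many $c$-th powers $a$ satisfy $V(a)=c$. Your treatment of $10\nmid c$ thus proves the corollary exactly in the range where it holds.
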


Indeed, setting $S$ as the sum of digits function to base $10$ and assuming $c > 2$,\linebreak we have $S(10^{c+k}+10^{c-\min\{\nu_5(c), \hspace{0.5 mm} \nu_2(c)\}}+1)=3$, $3 \mid \left(10^{c+k}+10^{c-\min\{\nu_5(c), \hspace{0.5 mm} \nu_2(c)\}}+1 \right)$ and $3^2 \nmid \left(10^{c+k}+10^{c-\min\{\nu_5(c), \hspace{0.5 mm} \nu_2(c)\}}+1 \right)$ (implying that none of these numbers can be a perfect power of degree greater than $1$), so there exist infinitely many perfect powers of degree $c=3,4,5,6 \ldots$ having a constant congruence speed of $c$, and it is sufficient to consider the tetration bases of the form $(10^{c+k}+10^{c-\min\{\nu_5(c), \hspace{0.5 mm} \nu_2(c)\}}+1)^c$ (see the OEIS sequence \seqnum{A379243} for $k \coloneq 1$).

On the other hand, if $c \in \{1,2\}$, then the second last nonzero digit of any tetration base of the form $(10^{c+k}+10^{c-\nu_5(c)}+1)^c$ will not be equal to $5$ (we note that the second last nonzero digit of each $c$-th perfect power of $10^{k+t}+10^t+1$ is always equal to the last nonzero digit of $c$), and thus we can cover the remaining cases $c=1$ and $c=2$ by taking $10^{1+k}+10^1+1$ ($V((10^{1+k}+11)^1 )=1$ holds for each positive integer $k$ by \cite{12}, Equation~(16), line~9) and $(10^{2+k}+10^2+1)^2$ (respectively).

Since $\sqrt[c]{10^{1+k}+11} \in \mathbb{N} \Rightarrow c=1$ and $\sqrt[c]{(10^{2+k}+101)^2}\in \mathbb{N} \Rightarrow c \in \{1,2\}$, we have finally proved for any given positive integer $c$ the existence of infinitely many tetration bases $\tilde{a}^c$ which are perfect powers of degree $c$ (exactly) and such that $V(\tilde{a}^c)=V(\tilde{a})=c$.

\begin{remark}\label{Remark 2}
Let $c,d,k,t \in \mathbb{N}$ be such that $10^{d-1} \leq c < 10^d$ and $t \geq d+1$. Then, $(10^{k+t}+10^t+1)^c \equiv c \cdot 10^t+1 \pmod{10^{t+d}}$ easily follows from the trinomial expansion of $(10^{k+t}+10^t+1)^c$. Furthermore, as we call $j$ the last nonzero digit of $c$ (e.g., $j(9400{\color{red} 3}0)={\color{red} 3}$), we observe that it is sufficient to set $t \geq \min\{\nu_5(c), \nu_2(c)\}+ 2$ in order to achieve $V((10^{k+t}+10^{t-\nu_5(c)}+1)^c)=t$ if $c : j \nmid 5$ (by \cite{12}, Equation~(16), line~7) and $V((10^{k+t}+10^{t-\nu_2(c)}+1)^c)=t$ otherwise (by~\cite{12}, Equation~(16), line~8).
In particular, if $2 \nmid c$ or $5 \nmid c$, then $V((10^{k+c}+10^c+1)^c)=c$ (since $\min\{\nu_5(c), \nu_2(c)\}=0 \iff c \not\equiv 0 \pmod{10}$).
\end{remark}


\section{Conclusion} \label{sec:Concl}
\sloppy In the previous section, for each $c \in \mathbb{N}$, we have provided the general equation
\begin{equation*}
V \hspace{-0.7mm}\left(\left(10^{k+t}+10^{t-\min\{\nu_5(c), \hspace{0.5 mm} \nu_2(c)\}}+1 \right)^c \right)=t \quad (k,t \in \mathbb{N} : t>\min\{\nu_5(c),\nu_2(c)\}+1),
\end{equation*}
which shows the existence of infinitely many perfect powers of degree $c$ with a constant congruence speed of $\min\{\nu_5(c), \nu_2(c)\}+2$, $\min\{\nu_5(c), \nu_2(c)\}+3$, $\min\{\nu_5(c), \nu_2(c)\}+4$, $\min\{\nu_5(c), \nu_2(c)\}+5$, and so forth.

In conclusion, for any given positive integer $c$, we have constructed an infinite set of $c$-th perfect powers that are also characterized by a constant congruence speed of $c$.


\newpage
\noindent {\bf Acknowledgement.} The author would like to thank his friend Aldo Roberto Pessolano for having shared with him his first thoughts about the outcome of the computer search performed on the smallest integers of the form $a^c$, congruent to $5$ modulo $20$, and characterized by a constant congruence speed of $c$.

\bigskip

\noindent (Concerned with sequences
\seqnum{A018247},
\seqnum{A018248},
\seqnum{A317905}, 
\seqnum{A372490}, and
\seqnum{A379243}.)

\end{document}